\newcommand{\trs}[1]{\mathrm{tr}\ensuremath{[#1]}}
\renewcommand{\Re}{\ensuremath{\mathbb{R}}}
\newcommand{\D}{\ensuremath{\mathbf{D}}}
\newcommand{\kerG}{\mathbf{K}}
\newcommand{\mG}{\mathbf{m}}
\newcommand{\E}{\mathbb{E}}
\newcommand{\V}{\mathbb{V}}
\newcommand{\cov}{\operatorname{cov}}
\newcommand{\algrule}[1][.2pt]{\par\vskip.2\baselineskip\hrule height #1\par\vskip.2\baselineskip}
\title{\LARGE \bf
Adaptive Learning Kalman Filter with Gaussian Process}
\author{Taeyoung Lee\authorrefmark{1}
    \thanks{Taeyoung Lee, Mechanical and Aerospace Engineering, The George Washington University, Washington DC 20052 {\tt tylee@gwu.edu}}%
    \thanks{\textsuperscript{\footnotesize\ensuremath{*}}This research has been supported in part by NSF under the grant CNS-1837382, and AFOSR under the grant FA9550-18-1-0288.}
}
\newtheorem{definition}{Definition}
\newtheorem{prop}{Proposition}
\newtheorem{assumption}{Assumption}
\begin{document}
\allowdisplaybreaks
\maketitle \thispagestyle{empty} \pagestyle{empty}

\begin{abstract}
    This paper presents an adaptive Kalman filter for a linear dynamic system perturbed by an additive disturbance. 
    The objective is to estimate both of the state and the unknown disturbance concurrently, while learning the disturbance as a stochastic process of the state vector.
    This is achieved by estimating the state according to the extended Kalman filtering applied to the marginal distribution of the state, and by estimating the disturbance from a backward smoothing technique. 
    The corresponding pair of the estimated states and disturbances are fetched to a Gaussian process, which is constantly updated to resemble the disturbance process.
    The unique feature is that all of uncertainties in the estimated state and disturbance are accounted throughout the learning process. 
    The efficacy of the proposed approach is illustrated by a numerical example. 
\end{abstract}

\section{Introduction}

Kalman filters require that the system dynamics and its stochastic properties are exactly and completely given in prior. 
For example, an inaccurate noise covariance matrix results in sub-optimal performances or even divergence of error. 
To overcome these, various adaptive Kalman filters have been proposed~\cite{mehra1970identification,mehra1972approaches}.
For example, as the statistics of process noise are particularly challenging to obtain in prior, those are estimated online from the observed data~\cite{sarkka2009recursive,myers1976adaptive}.
Or, the optimal Kalman gain is directly estimated without estimating noise statistics~\cite{mehra1972approaches}. 
These have been applied to navigation systems~\cite{mohamed1999adaptive}, and visual object tracking~\cite{weng2006video}.
However, these approaches focus on parametric uncertainties, such as the covariance matrix of noise, and they do not handle unmodeled dynamics or disturbances that are dependent of the state.   

In machine learning, Gaussian processes have been widely used for stochastic modeling~\cite{williams2006gaussian}. 
It is defined as a stochastic process where any collection of those random variables is jointly Gaussian, 
and it is completely described by second-order statistics.
As such, it is often characterized by the covariance function, or the kernel function that describes the similarity between two input points. 
Gaussian processes can be also considered as a distribution over function on a continuous domain. 
In contrast to Bayesian learning with neural network~\cite{kendall2017uncertainties}, Gaussian processes inherit various properties of the normal distribution, and training or regression is completed explicitly without any iteration. 
Recently, it has been utilized for learning-based control of a nonlinear system~\cite{berkenkamp2016safe}, and reinforcement learning~\cite{engel2005reinforcement}.

This paper presents an adaptive Kalman filter that can deal with non-parametric, additive disturbances of a linear system, which is considered to be dependent of the state.
We aim to estimate the state and the disturbance concurrently, while modeling the disturbance function as a Gaussian process of the state.

First, the Gaussian process is extended to handle uncertainties in the input. 
The training data of any Gaussian process is composed of a set of input and output pairs. 
As the input state and the value of disturbance are estimated with uncertainties, the standard formulation of Gaussian processes with exact inputs cannot be directly applied here. 
We present an extended Gaussian process whose kernel function is adjusted to account the effects of noisy input data. 

Next, an adaptive learning Kalman filter is proposed by integrating forward filtering, backward smoothing, and learning. 
The forward filtering is to construct an estimate of the current state conditioned by all of available measurements, 
and the backward smoothing to update the estimate of the prior states using the current measurement.
This is followed by the learning process to augment and revise the training data set of an extended Gaussian process, which is updated to represent the disturbance more accurately. 

The unique property of the proposed adaptive learning Kalman filter is that the problem of state estimation is integrated with the learning process of the disturbance, while gauging the level of uncertainties between them. 
Such integration of learning and estimation has been unprecedented. 
Unless all of the elements of a state are measured directly, estimating the state is inherently coupled with learning the disturbance, as the prediction step of Bayesian estimation depends on the knowledge of disturbance, and also as the input to the disturbance function is only available through the current estimate of the state. 
We address this issue by thoroughly utilizing the extended Gaussian process, which is used to improve the state estimate while being refined from the improved estimate. 

Another desirable feature, especially for learning is that we can evaluate the confidence in the learned model depending on a selected input domain.  
This is particularly useful when utilizing the learned model beyond estimation, such as stochastic optimization or feedback controls.
A numerical example illustrates the state is successfully estimated in the presence of a state-dependent disturbance, which is currently estimated with an increasing accuracy and confidence.

\section{Extended Gaussian Process}

In this section, we first extend the Gaussian process such that it can deal with noisy and correlated data.
This is to incorporate uncertainties in the estimated state over the learning process of the proposed Kalman filter. 
Throughout this paper, we consider real, scalar valued Gaussian processes, and an extension for vector valued processes is available in~\cite{alvarez2012kernels}.
Also, $x\sim\mathcal{N}(\mu,\Sigma)$ denotes that a random variable $x$ is distributed according to the Gaussian distribution with the mean $\mu$ and the variance $\Sigma$ of appropriate dimensions. 
The corresponding density value is written as $\mathcal{N}(x|\mu,\Sigma)$.  

\subsection{Gaussian Process}

A Gaussian process is a stochastic process, defined such that any finite number of collection is jointly Gaussian~\cite{RasWilBK06}.
It is completely described by second-order statistics as follows. 
Define a mean function $\mG(x):\Re^n\rightarrow\Re$ and a positive-definite covariance function $\kerG(x,x'):\Re^n\times\Re^n\rightarrow\Re$, which is referred to as a kernel function.   
The corresponding Gaussian process is denoted by
\begin{align}
    g(x) \sim \mathcal{G}(\mG(x), \kerG(x,x')), \label{eqn:GP}
\end{align}

\subsection{Regression with Output Noise}

Let $\mathcal{D} = \{(x_i,g_i,\sigma_{g_i})\}_{i\in{1,\ldots N}} $ be a set of data, where $g_i\in\Re$ is a sample value of $g(x)$ when $x=x_i$, after corrupted by an additive, independent noise.
More explicitly, 
\begin{align}
    g_i \sim g(x_i) + \epsilon_{g_i},\label{eqn:g_pert}
\end{align}
with $\epsilon_{g_i} \sim \mathcal{N} (0,\sigma_{g_i}^2)$. 

Define $\mathbf{g}, \mathbf{x}$, and $\mG(\mathbf{x})\in\Re^N$ be the concatenation of $g_i$, $x_i$ and $\mG(x_i)$ for $i\in\{1,\ldots, N\}$, respectively. 
Also, let the matrix $\kerG(\mathbf{x},\mathbf{x})\in\Re^{N\times N}$ be defied such that its $i,j$-th element is $\kerG(x_i,x_j)$,
and let $\Sigma_{\mathbf{g}}= \mathrm{diag}[\sigma_{g_1}^2,\ldots, \sigma_{g_N}^2]\in\Re^{N \times N}$. 
The regression equation  for $g_*$ is 
\begin{align}
    g_* | \mathcal{D},x_* & \sim \mathcal{N}( \mG_* + \kerG_{*\mathbf{x}} (\kerG_{\mathbf{x}\mathbf{x}}+\Sigma_{\mathbf{g}})^{-1} (\mathbf{g}-\mG_\mathbf{x}),\nonumber\\
                          & \quad \kerG_{**} - \kerG_{* \mathbf{x}} (\kerG_{\mathbf{x}\mathbf{x}}+\Sigma_{\mathbf{g}})^{-1} \kerG_{\mathbf{x}*}), \label{eqn:GP_reg}
\end{align}
where the subscripts for $\mG$ and $\kerG$ denote the input arguments, e.g., $\kerG_{*\mathbf{x}} = \kerG(x_*, \mathbf{x})\in\Re^{1\times N}$. 

\subsection{Effects of Uncertain Inputs}

The preceding standard formulation of the Gaussian process assumes that the state vector for the data set $\mathbf{x}$ and the state for the regression $x_*$ are noise-free and uncorrelated. 
This is not desirable for the proposed adaptive learning Kalman filter, as the data set is an estimate of the possibly correlated state vector conditioned by measurements.

Several approaches have been considered to formulate a Gaussian process with uncertain inputs. 
In~\cite{dallaire2009learning}, an analytical expression for the expected value of a specific kernel is constructed for uncorrelated data. 
In~\cite{girard2003gaussian}, uncertainties in the state for the regression $x_*$ are incorporated by computing the first and the second moment of $g_*$. 
The uncertainties in the input is transformed to output noise in~\cite{mchutchon2011gaussian}.
Here we extend the approach of~\cite{girard2003gaussian} developed for uncertainties in $x_*$ to possibly correlated, uncertainty data set as follows. 
For simplicity, the output noise is not considered in this subsection, and it will be included later when formulating the extended Gaussian process formally.  

Suppose that for any $i\in\{1,\ldots, N\}$, the state $x_i$ in the data set follows a Gaussian distribution.
More specifically, $g_i$ is sampled from $g(x_i)$ where $x_i\sim N(\bar x_i, P_i)$ for a given mean $\bar x_i$ and a covariance $P_i\in\Re^{n\times n}$.
We have
\begin{align*}
    p(g_i) = \int_{\Re^n} p(g_i, x_i) dx_i.
\end{align*}
Since $p(g_i, x_i) = p(g_i|x_i) p(x_i)$, 
\begin{align}
    p(g_i) = \int_{\Re^n} \mathcal{N}(g_i|\mG(x_i),\kerG(x_i,x_i)) \mathcal{N}(x_i|\bar x_i, P_i) dx_i.\label{eqn:g_i}
\end{align}
Consequently, $g_i$ is not Gaussian in general.
Instead we show that the mean and the covariance of $g_i$ can be approximated as follows. 

\begin{prop}\label{prop:g_ij}
    Consider a set of random variables $\{g_1,\ldots, g_n\}$ distributed according to \eqref{eqn:g_i}, where $\{x_1,\ldots, x_N\}$ is jointly Gaussian with $\E[x_i]=\bar x_i\in\Re^n$, $\V[x_i]=P_i\in\Re^{3\times 3}$ and $\cov[x_i,x_j] = P_{ij}\in\Re^{n\times n}$.
    The mean and the covariance of $g_i$ are given by
    \begin{align}
        \E[g_i] & = \mG(\bar x_i) + \frac{1}{2}\trs{P_i\D^2 \mG(\bar x_i)}  + \mathcal{O}(\|x_i-\bar x_i\|^4), \label{eqn:Eg_i}\\
        \cov[g_i,g_j] & = \kerG(\bar x_i, \bar x_j) + \frac{1}{2}\trs{\D^2 \kerG(\bar x_i,\bar x_j) \mathbf{P}_{ij}} \nonumber\\
                                                    & \quad +  \trs{\D \mG(\bar x_i) \D\mG(\bar x_j)^T P_{ij}^T} \nonumber\\
                                                    & \quad -\frac{1}{4} \trs{\D^2 \mG(\bar x_i)P_i} \trs{\D^2 \mG(\bar x_j)P_j} \nonumber \\
                                                    & \quad + \mathcal{O}(\|x_i-\bar x_i\|^4), \label{eqn:covg_ij}
    \end{align}
where $\D$ denotes the derivatives, e.g.,
    \begin{align*}
        \D^2 \mG(\bar x_i) = \frac{\partial^2 \mG(x)}{\partial x \partial x}\bigg|_{x=\bar x_i},
    \end{align*}
    and $\mathbf{P}_{ij}\in\Re^{2n\times 2n}$ is defined as
    \begin{align}
        \mathbf{P}_{ij} = \begin{bmatrix} P_i & P_{ij} \\ P_{ji} & P_j \end{bmatrix}. \label{eqn:PP_ij}
    \end{align}
\end{prop}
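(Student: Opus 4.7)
The plan is to apply the tower property of expectation, conditioning on the random inputs, then use second-order Taylor expansions of $\mG$ and $\kerG$ around the means $\bar x_i,\bar x_j$, and finally evaluate the results against the second moments of the jointly Gaussian $(x_1,\ldots,x_N)$. The key identity I would exploit is that, conditional on $(x_i,x_j)$, the pair $(g_i,g_j)$ is jointly Gaussian with mean $(\mG(x_i),\mG(x_j))$ and cross-covariance $\kerG(x_i,x_j)$, so
\begin{equation*}
    \E[g_i\mid x_i]=\mG(x_i),\qquad \E[g_ig_j\mid x_i,x_j]=\mG(x_i)\mG(x_j)+\kerG(x_i,x_j),
\end{equation*}
which reduces the problem to computing Gaussian expectations of deterministic Taylor polynomials.

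For \eqref{eqn:Eg_i}, I would write $\E[g_i]=\E_{x_i}[\mG(x_i)]$ and Taylor expand $\mG$ to second order around $\bar x_i$ with remainder $\mathcal{O}(\|\delta x_i\|^3)$, where $\delta x_i=x_i-\bar x_i$. The linear term integrates to zero under $x_i\sim\mathcal{N}(\bar x_i,P_i)$, the quadratic form contributes $\tfrac12\trs{P_i\D^2\mG(\bar x_i)}$ via $\E[\delta x^TA\delta x]=\trs{AP}$, and the third-order remainder contributes nothing at that order by Gaussian symmetry, leaving a net error $\mathcal{O}(\|\delta x_i\|^4)$.

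For \eqref{eqn:covg_ij}, I would compute $\cov[g_i,g_j]=\E[g_ig_j]-\E[g_i]\E[g_j]$. Using the conditional identity, $\E[g_ig_j]=\E_{x_i,x_j}[\mG(x_i)\mG(x_j)]+\E_{x_i,x_j}[\kerG(x_i,x_j)]$. The kernel term is handled by a second-order Taylor expansion of $\kerG$ viewed as a scalar function on $\Re^{2n}$: the quadratic contribution paired against the joint covariance $\mathbf{P}_{ij}$ from \eqref{eqn:PP_ij} gives $\tfrac12\trs{\D^2\kerG(\bar x_i,\bar x_j)\mathbf{P}_{ij}}$. Multiplying the two second-order Taylor series for $\mG(x_i)$ and $\mG(x_j)$ and taking expectation against the joint Gaussian with $\E[\delta x_i\delta x_j^T]=P_{ij}$, odd-order pieces vanish by symmetry; the surviving survivors up to $\mathcal{O}(\|\delta x\|^2)$ are $\mG(\bar x_i)\mG(\bar x_j)$, the two pieces $\tfrac12\mG(\bar x_j)\trs{P_i\D^2\mG(\bar x_i)}$ and its partner, and the gradient cross term $\E[\delta x_i^T\D\mG(\bar x_i)\D\mG(\bar x_j)^T\delta x_j]=\trs{\D\mG(\bar x_i)\D\mG(\bar x_j)^T P_{ij}^T}$ where I used $P_{ji}=P_{ij}^T$. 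Finally, forming $\E[g_i]\E[g_j]$ from the approximations already derived for the marginal means and subtracting cancels $\mG(\bar x_i)\mG(\bar x_j)$ and the two cross-mean pieces, while introducing the leftover product of traces $-\tfrac14\trs{\D^2\mG(\bar x_i)P_i}\trs{\D^2\mG(\bar x_j)P_j}$.

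The main obstacle is the bookkeeping of Taylor orders and Gaussian moments. In particular, I must verify that the $\D^2\mG\times\D^2\mG$ product term arising inside $\mG(x_i)\mG(x_j)$, whose expectation is genuinely of order $\|\delta x\|^4$, may be absorbed into the stated $\mathcal{O}(\|\delta x\|^4)$ error together with any third-order Taylor remainder (which drops out by symmetry). Only the explicit $-\tfrac14$ residue survives, because it enters through the product of the second-order corrections to the two marginal means and not through the joint expectation itself. The only other delicate point is the matrix-algebra transposition needed to cast the gradient cross term in the exact form $\trs{\D\mG(\bar x_i)\D\mG(\bar x_j)^T P_{ij}^T}$ rather than an equivalent cyclic rearrangement.
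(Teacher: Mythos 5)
Your proposal is correct and follows essentially the same route as the paper: condition on the Gaussian inputs, Taylor-expand $\mG$ and $\kerG$ to second order, and evaluate against the joint Gaussian moments, with the $-\tfrac14$ product of traces emerging from the product of the two second-order mean corrections. The only cosmetic difference is that you compute $\cov[g_i,g_j]=\E[g_ig_j]-\E[g_i]\E[g_j]$ after conditioning, whereas the paper organizes the identical algebra via the law of total covariance, splitting the result into $\E[\cov[g_i,g_j\mid x_i,x_j]]$ (the kernel term) and $\cov[\E[g_i\mid x_i],\E[g_j\mid x_j]]$ (the gradient cross term and the $-\tfrac14$ residue).
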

\begin{proof}
According to the law of total expectation, namely $\E[Y] = \E_X[\E_Y[Y|X]]$~\cite{AksHenBJ04}, we have
\begin{align*}
    \E[g_i] =  \E_{x_i} [ \E[g(x_i)|x_i=\chi]] = \int_{\Re^n} \mG(\chi) \mathcal{N}(\chi|\bar x_i, P_i) d
    \chi.
\end{align*}
The Tayler series expansion about $\chi=\bar x_i$ yields \eqref{eqn:Eg_i}, and it becomes of the fourth-order, as the third order moment of any Gaussian distribution is zero. 

Similarly, from the law of the total covariance, namely $\cov[X,Y] = \E[\cov[X,Y|Z]] + \cov[\E[X|Z],\E[Y|Z]]$~\cite{AksHenBJ04},
\begin{align}
    \cov[g_i,g_j] & = \E[\cov[g_i,g_j| x_i = \chi_i, x_j=\chi_j]] \nonumber \\
                  & \quad + \cov[\E[g_i|x_i=\chi_i],\E[g_j|x_j=\chi_j]]. \label{eqn:covg_ij_0}
\end{align}
The first term of the right hand size of \eqref{eqn:covg_ij_0} is
\begin{align*}
    & \E [\cov[g_i,g_j| x_i = \chi_i, x_j=\chi_j]] \\
       & = \iint \kerG(\chi_i,\chi_j) \mathcal{N}((\chi_i,\chi_j)|(\bar x_i,\bar x_j), \mathbf{P}_{ij} ) d\chi_i d\chi_j\\
       & = \kerG(\bar x_i,\bar x_j)  + \frac{1}{2}\trs{\D^2_1 \kerG(\bar x_i,\bar x_j) P_i} \\
       & \quad + \trs{\D_1\D_2 \kerG(\bar x_i,\bar x_i) P_{ij}^T} + \frac{1}{2}\trs{\D^2_2\kerG(\bar x_i,\bar x_j)P_j} + \mathcal{O}(4),
\end{align*}
where $\D_1$ denotes the derivatives with respect to the first input argument, and $\D_2$ is defined similarly. 
For instance, the $k,l$-th element of $\D_1\D_2 \kerG(\bar x_i,\bar x_j)\in\Re^{n\times n}$ is given by
\begin{align*}
    [ \D_1\D_2 \kerG(\bar x_i,\bar x_j)]_{k,l} = \frac{\partial^2 \kerG(\chi_i,\chi_j)}{\partial \chi_{ik}\partial \chi_{jl}}\bigg|_{\chi_{i}=\bar x_i, \chi_j=\bar x_j},
\end{align*}
where $\chi_{ik}$ and $\chi_{jl}\in\Re$ denotes the $k$-th element of $\chi_i$, and the $l$-th element of $\chi_j$, respectively. 
The above reduces to the first two terms of the right hand side of \eqref{eqn:covg_ij} with \eqref{eqn:PP_ij}.

Next, the second term of \eqref{eqn:covg_ij_0} is 
\begin{align*}
    & \cov[\E[g_i|x_i=\chi_i],\E[g_j|x_j=\chi_j]]\\
    & = \E[(\mG(\chi_i)-\E[\mG(\chi_i)]) (\mG(\chi_j)-\E[\mG(\chi_j)])] \\
    & = \trs{\D \mG(\bar x_i) \D\mG(\bar x_j)^T P_{ij}^T}  \\
    & \quad - \frac{1}{4} \trs{\D^2 \mG(\bar x_i)P_i} \trs{\D^2 \mG(\bar x_j)P_j} + \mathcal{O}(4),
\end{align*}
which corresponds to the remaining part of \eqref{eqn:covg_ij}.
\end{proof}

\subsection{Extended Gaussian Process}

The above proposition states that the mean and the covariance of $g$ is approximated by \eqref{eqn:Eg_i} and \eqref{eqn:covg_ij} up to the fourth order of the perturbation of the input state. 
Let $\tilde\mG:\Re^n\rightarrow\Re$ and $\tilde\kerG:\Re^n\times\Re^n\rightarrow\Re$ be the corresponding approximation:
\begin{align}
    \tilde\mG(x_i) & = \mG(\bar x_i) + \frac{1}{2}\trs{P_i\D^2 \mG(\bar x_i)}, \label{eqn:tilde_m}\\
    \tilde\kerG(x_i,x_j) & = \kerG(\bar x_i, \bar x_j) + \frac{1}{2}\trs{\D^2 \kerG(\bar x_i,\bar x_j) \mathbf{P}_{ij}} \nonumber\\
                  & \quad +  \trs{\D \mG(\bar x_i) \D\mG(\bar x_j)^T P_{ij}^T} \nonumber\\
                  & \quad -\frac{1}{4} \trs{\D^2 \mG(\bar x_i)P_i} \trs{\D^2 \mG(\bar x_j)P_j} \label{eqn:tilde_K}.
\end{align}
The Gaussian process with the above perturbed mean and kernel is defined as the \textit{extended} Gaussian process.
\begin{definition}
Consider the Gaussian process given at \eqref{eqn:GP}. 
Assume that any collection of the input is jointly Gaussian with a prescribed mean and variance.
The corresponding extended Gaussian distribution is defined as
\begin{align}
    g(x) \sim \tilde{\mathcal{G}}(\tilde\mG(x), \tilde{\kerG}(x,x')). \label{eqn:EGP}
\end{align}
\end{definition}

In short, the extended Gaussian approximates the standard Gaussian process perturbed by noisy input, namely \eqref{eqn:g_i}, up to the second moments. 

\subsection{Regression of Extended Gaussian Process}

The desirable feature is that all of properties of the standard Gaussian process hold with the perturbed mean and kernel. 
For instance, suppose the output is perturbed as in \eqref{eqn:g_pert}. 
The training data set of the extended Gaussian process is given by $\tilde{\mathcal{D}} = \{ \bar x_i, g_i, P_{ij}, \sigma_{g_i} \}_{i,j\in\{1,N\}}$.
We have
\begin{align}
    \mathbf{g} \sim \mathcal{N} (\tilde\mG(\mathbf{g}), \tilde\kerG(\mathbf{x},\mathbf{x}) + \Sigma_{\mathbf{g}}).
\end{align}

For regression, let $g_*\in\Re$ be a sample value for $x=x_*$, where $x_*$ is jointly Gaussian with $\mathbf{x}$. 
Specifically, $x_*\sim\mathcal{N}(\bar x_*, P_*)$ with $\cov(x_i,x_*)=P_{i*}\in\Re^{n\times n}$ for $i\in\{1,\ldots, n\}$.
The joint distribution for $(\mathbf{g},g_*)$ is
\begin{align}
    \begin{bmatrix} 
        \mathbf{g} \\ g_*
    \end{bmatrix}
    & = 
    \mathcal{N}
    \left( \begin{bmatrix} 
            \tilde\mG(\mathbf{x}) \\
            \tilde\mG(x_*)
        \end{bmatrix}
    ,
    \begin{bmatrix}
        \tilde\kerG(\mathbf{x}, \mathbf{x}) + \Sigma_{\mathbf{g}} & \tilde\kerG(\mathbf{x}, x_*)\\
        \tilde\kerG(x_*, \mathbf{x}) & \tilde\kerG(x_*, x_*)\\
    \end{bmatrix}
\right).\label{eqn:ggg*_EGP}
\end{align}
Let the input data be $\mathcal{I}_*=(\bar x_*, P_*, P_{1*},\ldots, P_{N*})$.
Similar with \eqref{eqn:GP_reg},
\begin{align}
    g_* | \tilde{\mathcal{D}}, \mathcal{I}_*  \sim \mathcal{N}( \tilde \mu(x_*) , \tilde\Sigma(x_*)),
\end{align}
where the mean and the covariance of the output are defined as
\begin{align}
    \tilde \mu(x_*) & = \tilde \mG_* + \tilde \kerG_{*\mathbf{x}} (\tilde \kerG_{\mathbf{x}\mathbf{x}}+\Sigma_{\mathbf{g}})^{-1} (\mathbf{g}-\tilde \mG_{\mathbf{x}}), \label{eqn:tilde_mu}\\
    \tilde\Sigma(x_*) & = \tilde \kerG_{**} - \tilde \kerG_{* \mathbf{x}} (\tilde \kerG_{\mathbf{x}\mathbf{x}}+\Sigma_{\mathbf{g}})^{-1} \tilde \kerG_{\mathbf{x}*}) .\label{eqn:tilde_sigma}
\end{align}

The above expressions require that the matrix composed of the kernel function $\tilde{\kerG}_{\mathbf{xx}}$ be positive-definite. 
As presented at Proposition~\ref{prop:g_ij}, it is a fourth-order approximation of the covariance matrix of $(g_1,\ldots, g_N)$. 
Therefore, there is no guarantee that the kernel $\tilde \kerG$ is positive-definite, especially if $P_{i}$ is large. 

Various techniques have been considered to deal with indefinite kernel functions.
We adopt the technique referred to as spectrum flip~\cite{chen2009similarity}. 
For consistency in regression, this method is applied to the covariance matrix for the concatenated training data and the regression input.
Let the covariance matrix of~\eqref{eqn:ggg*_EGP} be $\mathbf{\Sigma}\in\Re^{N+1\times N+1}$, which is symmetric, but not necessarily positive-definite.  
Suppose the eigendecomposition of $\mathbf{\Sigma}$ be $\mathbf{\Sigma} = V \Lambda V^T$, where $V\in\Re^{N+1\times N+1}$ is composed of normalized orthonormal eigenvectors, and $\Lambda\in\Re^{N+1\times N+1}$ is the diagonal matrix whose diagonal elements are the corresponding eigenvalues. 
The spectrally flipped covariance is given by $\mathbf{\Sigma}'= V\sqrt{\Lambda^2}V^T$, which replaces the covariance of \eqref{eqn:ggg*_EGP} for regression, e.g., the flipped $\tilde\kerG_{**}$ corresponds to the $N+1,N+1$-th element of $\mathbf{\Sigma}'$.
This can be interpreted as formulating a kernel on the pseudo-Euclidean space.

\subsection{Numerical Example}

We consider a numerical example for $g(x) = \sin 4\pi x$.
The training data are chose as
\begin{gather*}
    \bar x_i =0.1\times i, \quad P_i = 0.01^2, \quad P_{ij}=0,
    \quad \sigma_{g_i} =0.01,
\end{gather*}
for $0\leq i\neq j \leq 10$, resulting in $N=11$ data points. 
The value of $x_i$ and $g_i$ is sampled from the corresponding  Gaussian distribution. 
For regression, $x_*$ is varied from $0$ to $1$ with $P_*=0.01^2$ and $P_{i*}=0$.
For the kernel function, we use the squared exponential function given at Appendix A, with the hyperparameters $L=0.1I_{N\times N}$, $\sigma_f=1$,  $\sigma_n=0.1$.

Figure \ref{fig:EGP} illustrates the results of regression, where the true function value is denoted by a red line, and the output of the extended Gaussian process is denoted by a blue curve with $3\sigma$ bounds denoted by gray shades.  
The training data are marked with blue stars. 

In particular, Figure \ref{fig:EGP_a} is when the variance of the fifth data point is increased to $P_4=0.1^2$, i.e., $x_4\sim\mathcal{N}(0.4, 0.1^2)$. 
The corresponding sample value for $(x_4,g_4)=(0.48, -1.96)$ is marked by a blue circle around a star. 
Due to the large uncertainties at $x_4$, the output of the extended Gaussian process also exhibits increased uncertainties around $\bar x_4 =0.4$. 

Similarly, Figure \ref{fig:EGP_b} shows the results when the variance of the regression point $x_*$ is increased to $P_*=0.04^2$ for $0.7\leq x_* \leq 0.75$. 
The variance of the output is increased accordingly over the same range. 
These illustrate the capability of the extended Gaussian process in handling uncertainties in the input for both of training data and regression.

\begin{figure}
    \centerline{
        \subfigure[Regression with increased uncertaintes in the training data]{
            \includegraphics[width=0.7\columnwidth]{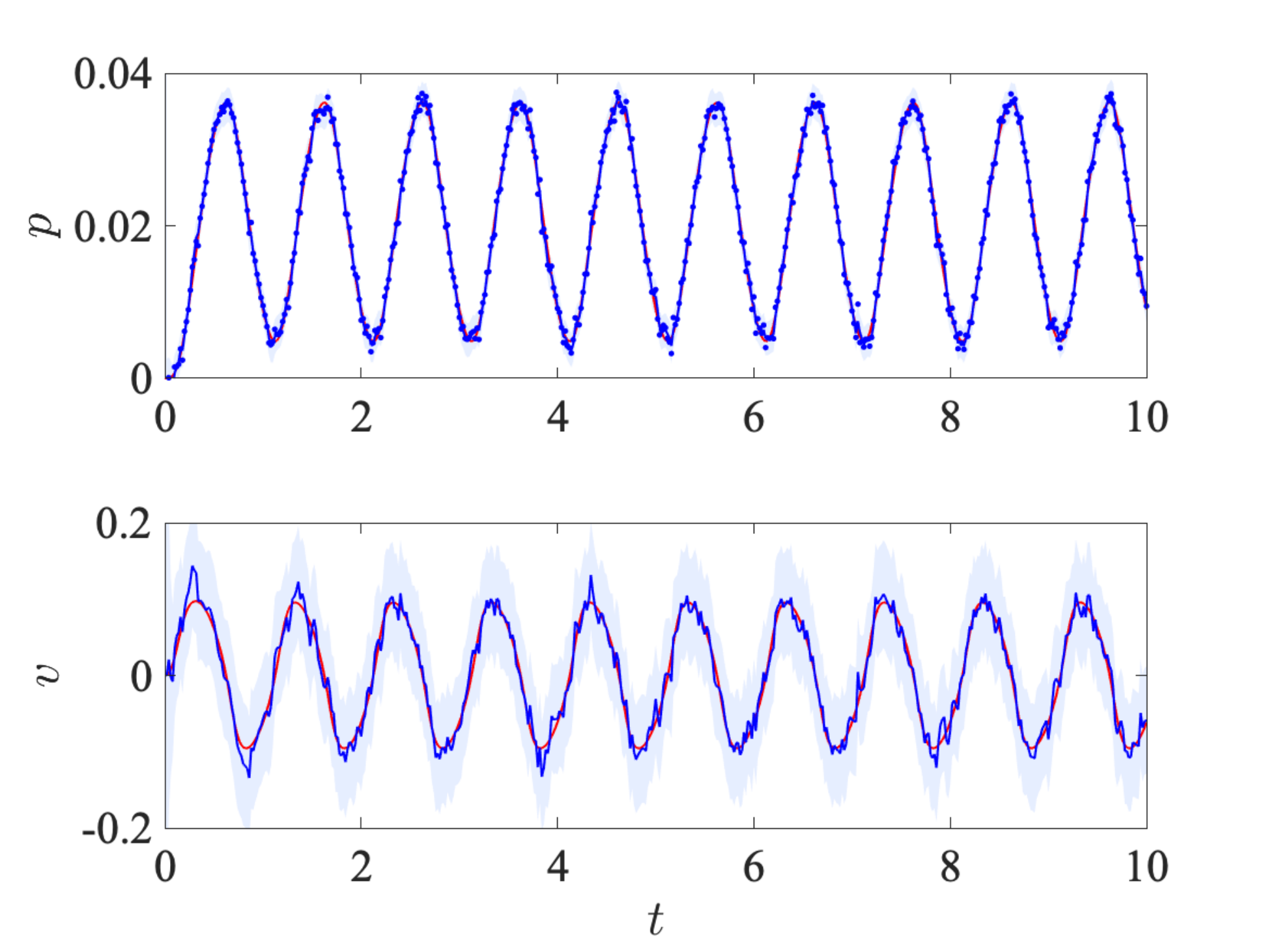}
        \label{fig:EGP_a}}
    }
    \centerline{
        \subfigure[Regression with increased uncertainties in th e input]{
            \includegraphics[width=0.7\columnwidth]{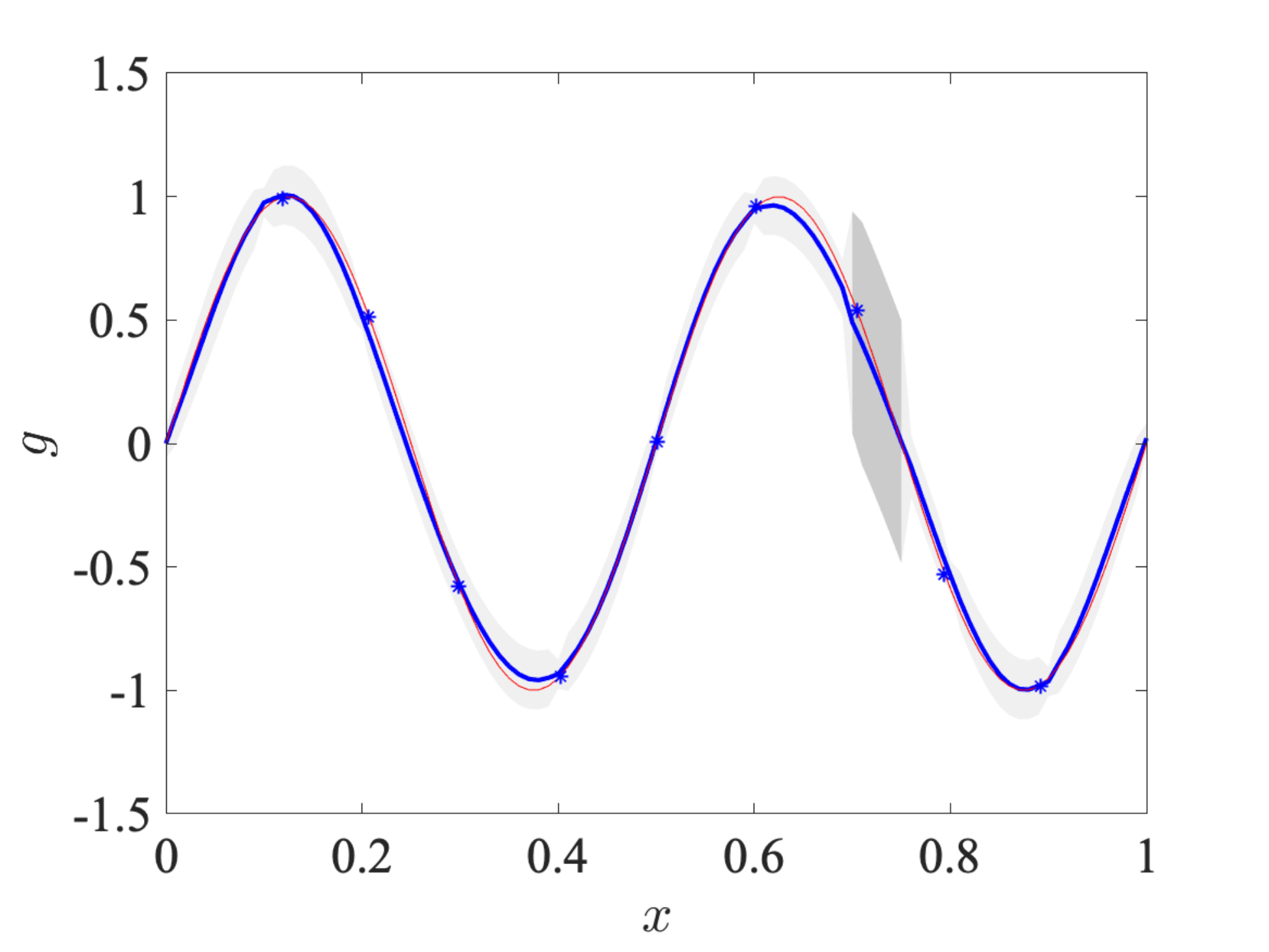}
        \label{fig:EGP_b}}
    }
    \caption{Numerical example for the proposed extended Gaussian process}\label{fig:EGP}
\end{figure}

\section{Adaptive Learning Kalman Filter}

In this section, we propose an adaptive learning Kalman filter for a linear time-varying system perturbed by an additive disturbance.
The key idea is that the current knowledge of the disturbance is represented by the extended Gaussian process presented in the prior section,
and it is refined whenever new measurements become available. 


\subsection{Problem Formulation}

Consider a  discrete, time-varying system given by
\begin{align}
    x_{k+1} & = A_k x_k + B_k u_k + G_k g(x_k) + w_k,\label{eqn:x_kp} \\
    z_k & = H_k x_k + v_k,\label{eqn:z_k}
\end{align}
where $x_k\in\Re^n$, $u_k\in\Re^m$, and $g(x_k)\in\Re^p$ are the state, the control input, and the state-dependent disturbance, respectively.
The sensor measurement is given by $z_k\in\Re^p$.
The process noise and the measurement noise are denoted by $w_k\in\Re^n$ and $v_k\in\Re^n$, respectively, with $w_k\sim\mathcal{N}(0, Q_k)$ and $v_k\sim\mathcal{N}(0,R_k)$ for symmetric, positive-definite matrices $Q_k\in\Re^{n\times n}$ and $R_k\in\Re^{q\times q}$.
The matrices $A_k,B_k,G_k$ and $H_k$ are of appropriate dimensions.

The initial state follows $x_0\sim\mathcal{N}(\bar x_0, P_0)$ for the given mean $\bar x_0\in\Re^n$ and the covariance $P_0\in\Re^{n\times n}$.
The initial state and the noise vectors at every step are mutually independent.  

We assume $g(x)$ follows a Gaussian process as in~\eqref{eqn:GP}.
Consequently, when the training data are uncertain, it can be modeled as an extended Gaussian process \eqref{eqn:EGP}.
Instead of distinguishing the true system from the learned model, it is considered that the initial estimate of $g$ is conservative enough to enclose the true disturbance as one of its sample process.
In other words, the variance without any training data, namely $\kerG(x,x)$ is sufficiently large. 
We further make the following assumption. 
\begin{assumption}\label{asm:G}
    The matrix $G_k\in\Re^{n\times p}$ has the full column rank for all $k$.
\end{assumption}
This is to ensure that we can infer the value of $g(x_k)$ from the estimates of $x_k$ and $x_{k+1}$ in the learning process. 

The proposed adaptive learning Kalman filter is composed of three steps: prediction, correction, and learning.

\subsection{Prediction and Correction}

We first describe the stochastic property of the extended Gaussian process at the $k$-th step. 
Let the training data set at the $k$-th step be
\begin{align}
    \tilde{\mathcal{D}}_k = \{\bar x^S_j, \bar g_j, P^S_{ji}, \sigma_{g_j}\}_{i,j\in\{0,\ldots, k-1\}}, \label{eqn:Dk}
\end{align}
which is composed of the estimated value of the unknown disturbance $\bar g_j$ at a given state $\bar x^S_j$ with uncertainties represented by $\sigma_{g_j}$ and $P^S_{ji}$, respectively. 
The input data for regression at the $k$-th step is 
\begin{align}
    \mathcal{I}_k =\{ \bar x_k, P_k, \{P^S_{jk}\}_{k\in\{0,\ldots, k-1\}}\}.\label{eqn:Ik}
\end{align}

The training data set and the input date will be defined later at the learning step by using all of the measurements available at $t_k$,  namely $Z_k=[z_1,\ldots z_k]\in(\Re^q)^k$.
For the initial time, there is no training data available, and therefore $\tilde{\mathcal{D}}_0=\emptyset$, and the input data reduces to $\mathcal{I}_0=\{ \bar x_0, P_0\}$. 

From \eqref{eqn:tilde_mu} and \eqref{eqn:tilde_sigma},
\begin{align}
    \E[g_k] & = \tilde\mu(x_k),\label{eqn:Eg_k} \\
    \V[g_k] & = \tilde\Sigma(x_k),
\end{align}
which are computed from $\tilde{\mathcal{D}}_k$ and $\tilde{\mathcal{I}}_k$. 
By adopting the approaches of extended Kalman filters, we take the linearization of the mean function to obtain
\begin{align}
    \cov[g_k, x_k] = \D\tilde\mu(x_k) P_{k}, \label{eqn:covgx}
\end{align}
where $\D\tilde\mu(x_k)\in\Re^{q\times n}$ is the derivative of $\tilde\mu(x_k)$ with respect to the mean value of $x_k$. 

Let $x_k|Z_k \sim \mathcal{N}(\bar x_k, P_k)$. 
From \eqref{eqn:Eg_k}-\eqref{eqn:covgx}, it is straightforward to show that the joint distribution $x_{k+1},z_{k+1}|Z_k$ is given by
\begin{align}
    \begin{bmatrix}
        x_{k+1} | Z_k\\
        z_{k+1} | Z_k 
    \end{bmatrix}
    \sim \mathcal{N}(
    \begin{bmatrix}
        \bar x_{k+1}^-\\
        H_k \bar x_{k+1}^ - 
    \end{bmatrix} ,
    \begin{bmatrix}
        P_{k+1}^- & P_{k+1}^- H_{k+1}^T \\
        H_{k+1}P_{k+1}^ - & S_{k+1}
    \end{bmatrix}
    ),\label{eqn:xzZ}
\end{align}
where  $\bar x_{k+1}^-\Re^n$, $P_{k+1}^-\in\Re^{n\times n}$, and $S_{k+1}\in\Re^{q\times q}$ are 
\begin{align}
    \bar x^-_{k+1} & = A_k \bar x_k + B_k u_k + G_k\tilde\mu(x_k),\\
    P^-_{k+1} & = A_k P_k A_k^T + A_kP_kG_k^T (\D\tilde\mu(x_k))^T \nonumber\\
              & \quad + G_k\D\tilde\mu(x_k) P_k A_k^T + G_k\tilde\Sigma(x_k) G_k^T + Q_k,\\
    S_{k+1} & = H_{k+1} P_{k+1}^- H_{k+1}^T + R_{k+1}.
\end{align}

From \eqref{eqn:Gau_cond}, the posterior distribution conditioned by the measurement $z_{k+1}$ is given by
\begin{align}
    x_{k+1}|Z_{k+1} \sim \mathcal{N}(\bar x_{k+1}, P_{k+1}),\label{eqn:xkpZkp}
\end{align}
where the posterior mean $\bar x_{k+1}\in\Re^n$, covariance $P_{k+1}\in\Re^{n\times n}$, and the Kalman gain $K_{k+1}\in\Re^{n\times q}$ are
\begin{align}
    \bar x_{k+1} & = \bar x_{k+1}^- + K_{k+1} (z_{k+1} - H_{k+1}\bar x_{k+1}^-),\\
    P_{k+1} & =  (I_{n\times n}- K_{k+1} H_{k+1}) P_{k+1}^-,\\
    K_{k+1} & = P_{k+1}^- H_{k+1}^T S_{k+1}^{-1}.
\end{align}
These are essentially an extended Kalman filter for the marginal distribution of the state, developed with the properties of the extended Gaussian process given by \eqref{eqn:Eg_k}--\eqref{eqn:covgx}.
This is followed by the learning step described below.

\subsection{Learning}

As the uncertain term is represented by an extended Gaussian process, the learning step constitutes of refining and augmenting the training data with all of the measurement available. 
When a new measurement $z_{k+1}$ becomes available, the above correction step revises $x_{k+1}|Z_{k}$ to construct a new estimate $x_{k+1}|Z_{k+1}$, but the estimate for any of prior states is not updated. 
While this is reasonable for online state estimation with the Markov property, it is not desirable for the learning problem considered here, as the training data set is composed of the \textit{history} of estimated states.  

For the learning step, we first update the estimate for the prior states to construct $\{x_{j}|Z_{k+1}\}_{j\in\{0,\ldots, k+1\}}$.
Such problem of estimating past states conditioned by the current measurement  is referred to as smoothing~\cite{BryHo75}.

The smoothing problem is formulated as a backward recursive iteration, initiated with $x_{k+1}|Z_{k+1}\sim\mathcal{N}( \bar x^S_{k+1},  P^S_{k+1})$, where the superscript $S$ denotes the mean and the variance conditioned by all of the available measurements $Z_{k+1}$, estimated through the smoothing.  
From \eqref{eqn:xkpZkp},
\begin{align}
    \bar x^S_{k+1}=\bar x_{k+1},\quad P^S_{k+1}=P_{k+1}.\label{eqn:xSkp}
\end{align}

Next, we derive backward recursion equations.
For any $0\leq j \leq k$, suppose 
\begin{align}
    x_{j+1}|Z_{k+1}\sim\mathcal{N}(\bar x_{j+1}^S, P_{j+1}^S), \label{eqn:xZkp}
\end{align}
with the given mean and covariance $(\bar x_{j+1}^S, P_{j+1}^S)$.
From the definition of the conditional density, the joint distribution with the  state in the previous step is written as
\begin{align*}
    p(x_j,x_{j+1}|Z_{k+1}) & = p(x_j|x_{j+1},Z_{k+1}) p(x_{j+1}|Z_{k+1})\nonumber\\
                           & = p(x_j|x_{j+1},Z_{j}) p(x_{j+1}|Z_{k+1}),
\end{align*}
where we have used the Markov property that $x_j \perp (z_{j+1},\ldots, z_{k+1}) |x_{j+1}$ for the second equality. 
From \eqref{eqn:xZkp}, the last term is replaced by the outcome of the prior iteration as
\begin{align}
    p(x_j,x_{j+1}|Z_{k+1}) & = p(x_j|x_{j+1},Z_{j})\mathcal{N}(x_{j+1}| \bar x_{j+1}^S, P_{j+1}^S) ,\label{eqn:xxZ}
\end{align}

Next, we find the conditional distribution $p(x_j|x_{j+1},Z_{j})$ of the above expression, 
using its joint distribution given by
\begin{align*}
    p(x_j,x_{j+1}|Z_{j}) = p(x_{j+1}|x_j,Z_j) p(x_j|Z_j),
\end{align*}
which is not Gaussian in general. 
However, we have $x_j|Z_j\sim\mathcal{N}(\bar x_j, P_j)$ from the correction step. 
Similar with \eqref{eqn:xzZ}, it can be approximated by
\begin{align}
    \begin{bmatrix}
        x_{j} | Z_j\\
        x_{j+1} | Z_j 
    \end{bmatrix}
    \sim \mathcal{N}(
    \begin{bmatrix}
        \bar x_{j}\\
        \bar x'_{j+1}
    \end{bmatrix} ,
    \begin{bmatrix}
        P_{j} & P_j (A'_j)^T \\
        A'_j P_{j} & P'_{j+1}
    \end{bmatrix}
    ),
\end{align}
where  $\bar x'_{j+1}\in \Re^n$, and $A'_j, P'_{j+1}\in\Re^{n\times n}$ are
\begin{align*}
    \bar x'_{j+1} & = A_j \bar x_j + B_j u_j + G_j\tilde\mu(x_j),\\
    A'_j &  = A_j + G_j \D\tilde\mu(x_j),\\
    P'_{j+1} & = A_j P_j A_j^T + A_jP_jG_j^T (\D\tilde\mu(x_j))^T \nonumber\\
              & \quad + G_j\D\tilde\mu(x_j) P_j A_j^T + G_j\tilde\Sigma(x_j) G_j^T + Q_j.
\end{align*}
From \eqref{eqn:Gau_cond}, the conditional distribution is
\begin{align}
    x_j | x_{j+1},Z_j & \sim \mathcal{N}(
    \bar x_j + K'_j(x_{j+1} - \bar x'_{j+1}),\nonumber \\
                      & \quad (I_{n\times n} - K'_j A'_j ) P_j' ), \label{eqn:xxZ_normal}
\end{align}
where  $K'_j\in\Re^{n\times n}$ is 
\begin{align}
K'_j = P_j(A'_j)^T (P'_{j+1})^{-1}.
\end{align}

Finally, we substitute \eqref{eqn:xxZ_normal} to \eqref{eqn:xxZ}, and use the property of the Gaussian distribution, namely \eqref{eqn:Gau_cond},  to obtain
\begin{align}
    \bar x^S_{j} & = \E[x_j |Z_{k+1}] = \bar x_j + K'_j (\bar x^S_{j+1} - \bar x'_{j+1}), \label{eqn:xSj} \\
    P^S_j & = \V[x_j|Z_{k+1}] = (I_{n\times n} - K'_j A'_j ) P_j'  + K'_j P^S_{j+1}  K'_j,\\
    P^S_{j,j+1} & = \cov[x_j,x_{j+1}|Z_{k+1}] = K'_j P^S_{j+1}. \label{eqn:PSjj}
\end{align}
In short, these yield a backward recursion from $(\bar x^S_{j+1},P^S_{j+1})$ at \eqref{eqn:xZkp} to $(\bar x^S_{j},P^S_{j})$ at \eqref{eqn:xSj}--\eqref{eqn:PSjj}. 
Initiated by \eqref{eqn:xSkp}, we obtain the history of estimation $\{\bar x^S_j,  P^S_j, P^S_{j,j+1}\}_{j\in\{0,\ldots, k+1\}}$ conditioned by $Z_{k+1}$. 

These provide an estimate for the sample value of the Gaussian process. 
From \eqref{eqn:x_kp}, and Assumption \ref{asm:G}, the sample value of $g(x_j)$ satisfies
\begin{align*}
    g_j = G_j^{\dagger} (x_{j+1} - A_j x_j - B_j u_j + w_j),
\end{align*}
where $G_j^{\dagger}\in\Re^{p\times n}$ is the matrix pseudo-inverse given by $G_j^{\dagger} =(G_j^TG_j)^T G_j$. 
As a linear combination of jointly Gaussian variables follows another Gaussian distribution, $g_j|Z_{k+1}$ is Gaussian with
\begin{align}
    \bar g_j & = \E[g_j | Z_{k+1}] = G_j^{\dagger} (\bar x^S_{j+1} - A_j \bar x^S_j - B_j u_j ),\label{eqn:bargj}\\
    \sigma_{g_j} & = \V[g_j |Z_{k+1}] = G_j^{\dagger} ( P^S_{j+1} - A_j P^S_{j,j+1} - P^S_{j+1,j}A_j^T \nonumber\\
                 & \quad + A_j P^S_j A_j^T + Q_j ) (G_j^{\dagger})^T,\label{eqn:sigmagj}
\end{align}
for $j\in\{0,\ldots, k\}$. 
From \eqref{eqn:xSj}--\eqref{eqn:sigmagj}, we can construct the updated training data set $\tilde{\mathcal{D}}_k$ and the input data set, defined by \eqref{eqn:Dk} and \eqref{eqn:Ik}, respectively, for the next prediction and correction steps.

The overall procedure of the proposed adaptive learning Kalman filter is summarized at Table \ref{tab:ALKF}.
    
\begin{table}
\caption{Adaptive Learning Kalman Filter}\label{tab:ALKF}
\begin{algorithmic}[1]
\algrule[0.8pt]
\Procedure{Adaptive Learning Kalman Filter}{}
\algrule
\State $k=0$, $x_0\sim\mathcal{N}(\bar x_0, P_0)$, $\tilde{\mathcal{D}}_k=\emptyset$
	\Repeat
    \State $[\bar x_{k+1}, P_{k+1}]=${\fontshape{sc}\selectfont Kalman Filter}$(\bar x_k, P_k, \tilde{\mathcal{D}}_k, z_{k+1})$
    \State $\tilde{\mathcal{D}}_{k+1}=${\fontshape{sc}\selectfont Learning}$(\bar x_{k+1}, P_{k+1}, z_{k+1})$
		\State $k=k+1$
	\Until{terminal time is reached}
\EndProcedure
\algrule
\Procedure{$[\bar x_{k+1}, P_{k+1}]=${\fontshape{sc}\selectfont Kalman Filter}}{$\bar x_k, P_k, \tilde{\mathcal{D}}_k, z_{k+1}$}
    \State Gaussian process regression with \eqref{eqn:Eg_k}--\eqref{eqn:covgx}
    \State Prediction with \eqref{eqn:xzZ}
    \State Correction with \eqref{eqn:xkpZkp}
\EndProcedure
\algrule
\Procedure{$\tilde{\mathcal{D}}_{k+1}=${\fontshape{sc}\selectfont Learning}}{$\bar x_{k+1}, P_{k+1}, z_{k+1}$}
\State $(\bar x^S_{k+1}, P^S_{k+1}) = (\bar x_{k+1}, P_{k+1})$
\For{$j \gets k,\ldots 1$}
\State Compute $\bar x^S_j, P^S_j, P^S_{j,j+1}$ from \eqref{eqn:xSj}--\eqref{eqn:PSjj}
\State Compute $\bar g_j, \sigma_{g_j}$ from \eqref{eqn:bargj}--\eqref{eqn:sigmagj}
\EndFor
\State Set $\tilde{\mathcal{D}}_{k+1}=\{\bar x^S_j, \bar g_j, P^S_{ji}, \sigma_{g_j}\}_{i,j\in\{0,\ldots, k\}}$
\EndProcedure
\algrule[0.8pt]
\end{algorithmic}
\end{table}

\section{Numerical Example}

Consider a one-dimensional vehicle model moving along a straight line. 
The equations of motion are given by
\begin{align*}
    \dot p & = v,\\
    \dot v & =  u + \Delta(v),
\end{align*}
where $p,v\in\Re$ denote the position and the velocity of the vehicle, respectively. 
There is a control force and an unknown disturbance force defined as 
\[
    u(t) = \sin 2\pi t,\quad
    \Delta(v) = -100 |v| v.
\]
The disturbance corresponds to a drag acting opposite to the motion of the vehicle with the magnitude proportional to $v^2$. 
It is assumed that the position is measured by a sensor. 

Let the state vector be $x=[p, v]\in\Re^2$, and let the fixed step size be $h>0$. 
The above equations of motion are discretized as \eqref{eqn:x_kp} and \eqref{eqn:z_k} with
\begin{align*}
    A_k = \begin{bmatrix} 1 & h \\ 0 & 1 \end{bmatrix},\quad
    B_k = G_k = \begin{bmatrix} \frac{h^2}{2} \\ h \end{bmatrix},\quad
    H_k = \begin{bmatrix} 1 & 0 \end{bmatrix}.
\end{align*}
The time step is $h=0.02$.
The noise covariance matrices are chosen as $Q_k = \mathrm{diag}[0, 0.01^2]$,  $R_k=0.001^2$. 
The initial estimate is $\bar x_0=[0,0]$ and $P_0=0.2^2I_{2\times 2}$.

For the extended Gaussian process, the kernel function is chosen as the squared exponential function presented at Appendix B with the hyperparameters $l=0.04$, $\sigma_f=1$, and $\sigma_n=0.1$.
The mean function is chosen as zero-valued everywhere.

\begin{figure}
    \centerline{
        \subfigure[Kalman filter]{
            \includegraphics[width=0.9\columnwidth]{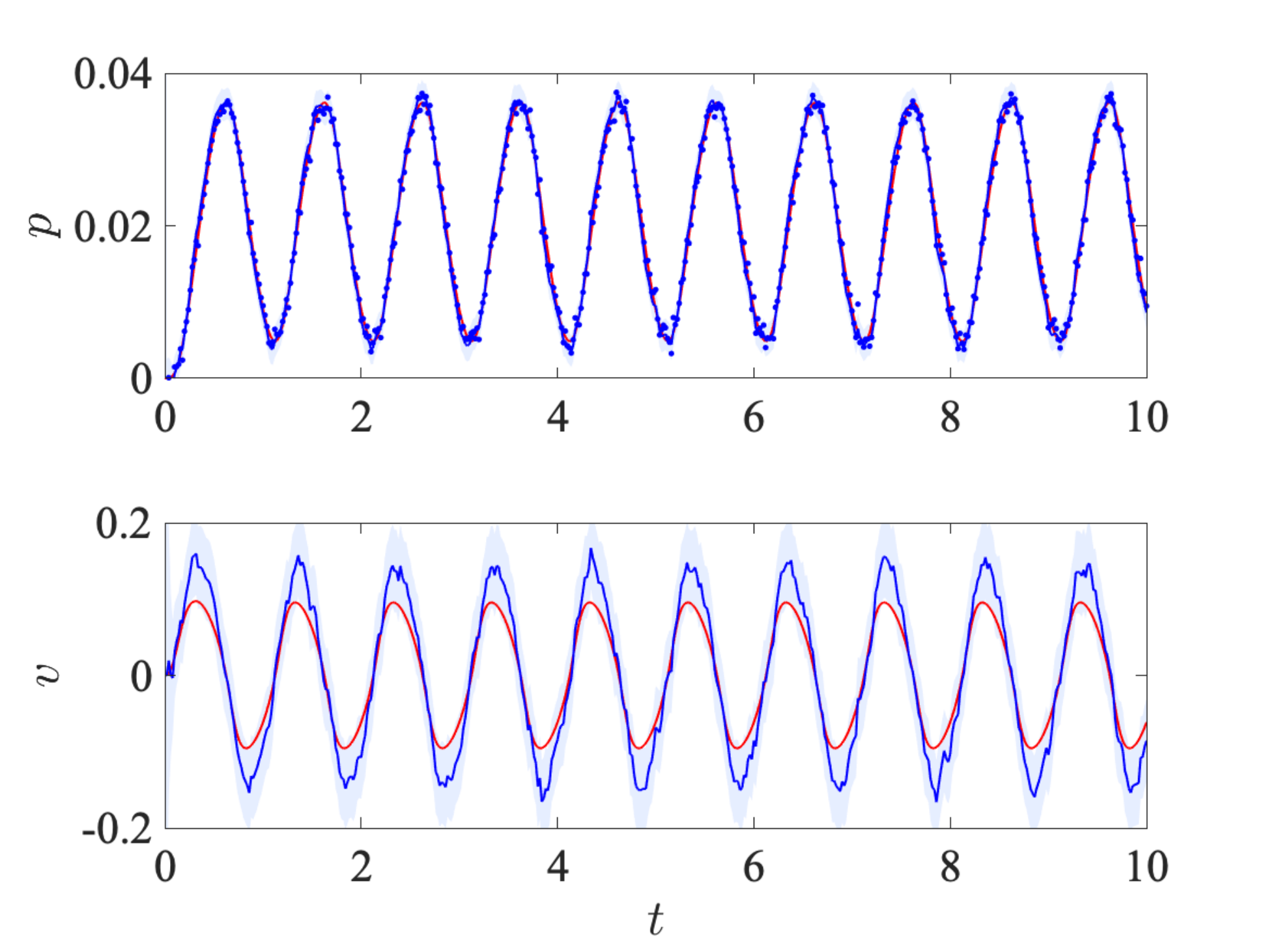}
        }
    }
    \centerline{
        \subfigure[Adaptive learning Kalman filter]{
            \includegraphics[width=0.9\columnwidth]{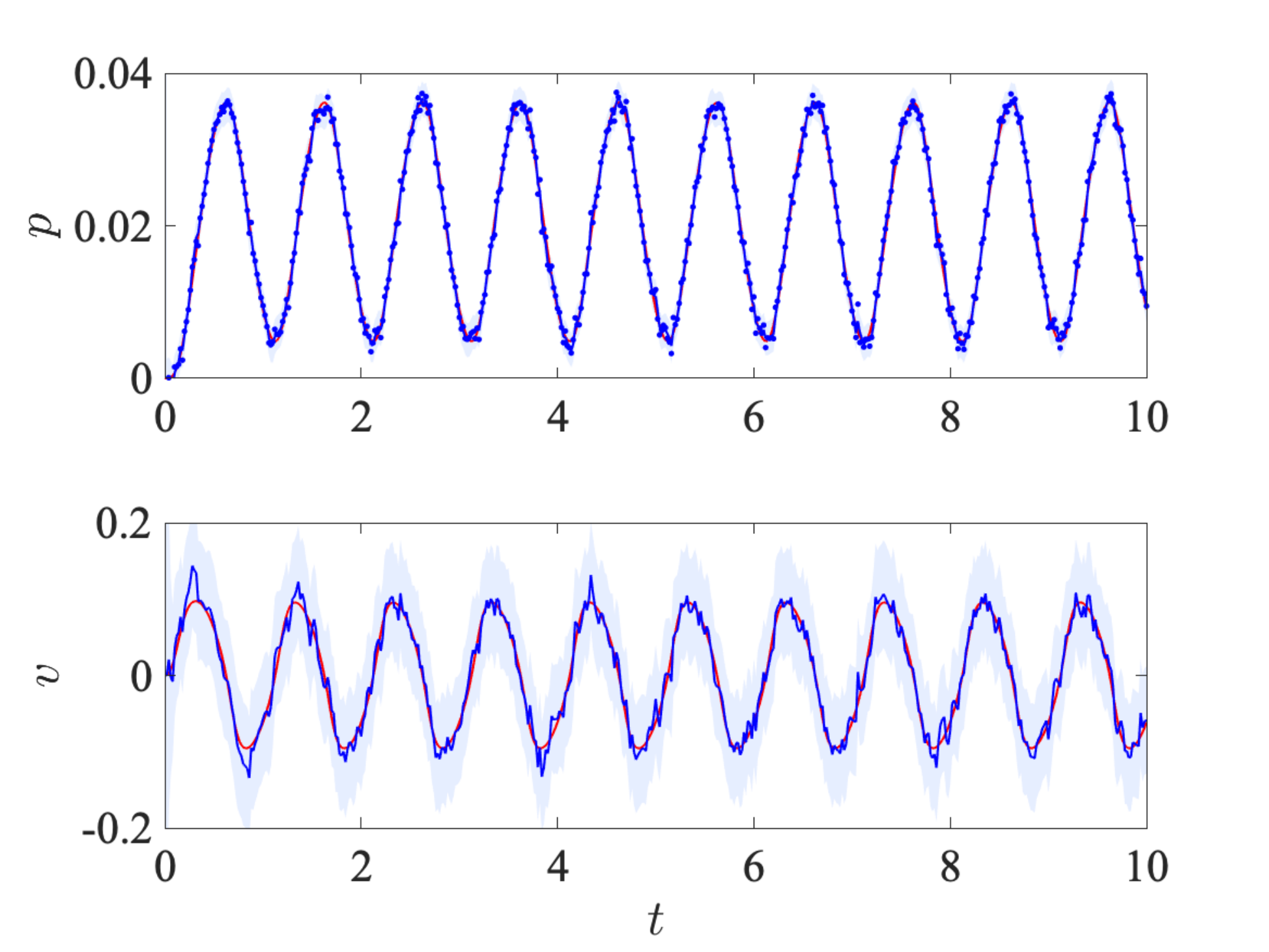}
        }
    }
    \caption{Simulation results: the estimated position and velocity are illustrated by blue curves with $3\sigma$ bounds, against the true trajectory illustrated by red curves. The dots at the position figure correspond to the position measurements. Adaptive learning Kalman filter results in smaller estimation errors, especially for $v$}\label{fig:NE}
\end{figure}

\begin{table}
    \caption{Mean squared error comparison}\label{tab:err}
    \vspace*{-0.2cm}
    \begin{center}
        \begin{tabular}{ccc}\toprule
        & Position est. error &  Velocity est. error \\\midrule
            KF & \num{4.6832e-5} & \num{1.4911e-3} \\
            ALKF & \num{2.7267e-5} & \num{5.6530e-4} \\ \bottomrule
        \end{tabular}
    \end{center}
\end{table}

The corresponding simulation results are illustrated at Figure \ref{fig:NE}, where the performance of the proposed adaptive learning Kalman filter is compared with the Kalman filter. 
There is no clear difference in the position estimation, as it is measured directly by a relatively accurate sensor. 
The advantage of the adaptive learning Kalman filter become more noticeable for the velocity estimate: the velocity estimated by the Kalman filter overshoots repeatedly; such behaviors dissipate gradually for the adaptive learning Kalman filter, and it follows the true velocity relatively well as the time progresses. 
The difference are clearly depicted by the mean squared errors summarized at Table \ref{tab:err}.

Finally, the progressive learning of the extended Gaussian process  is illustrated at Figure \ref{fig:EGP_Delta} for varying time instances.
These show that the accuracy and the confidence level of the extended Gaussian process increase over time as more training data become available. 
The learning model can be utilized beyond the presented estimation scenario.
For example, it would improve the accuracy of the estimate for any other trajectories in the similar operating range, and it can be utilized for feedback controls as well.

\begin{figure}
    \centerline{
        \subfigure[$t=0$]{
            \includegraphics[width=0.50\columnwidth]{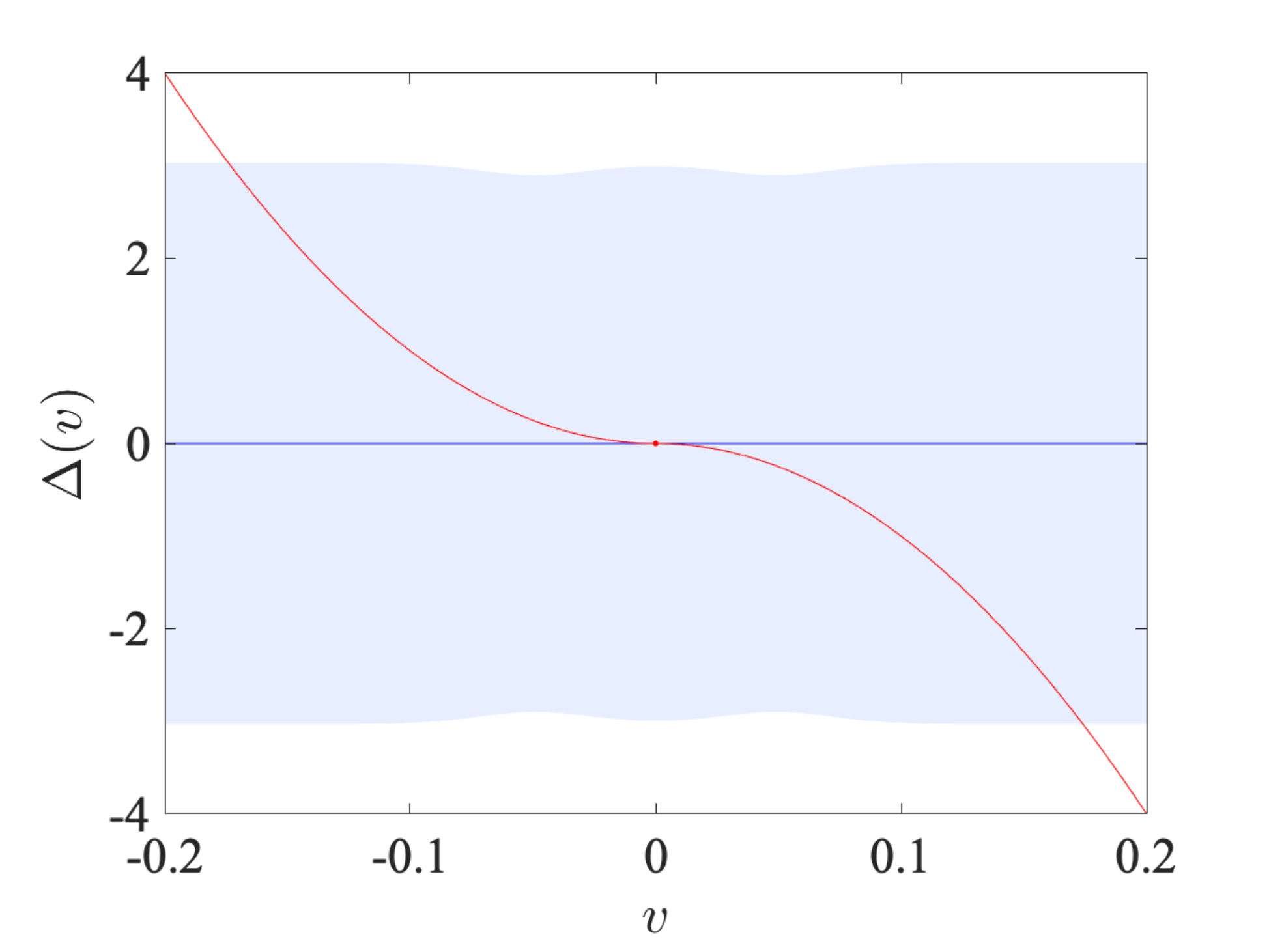}
        }
        \subfigure[$t=0.4$]{
            \includegraphics[width=0.50\columnwidth]{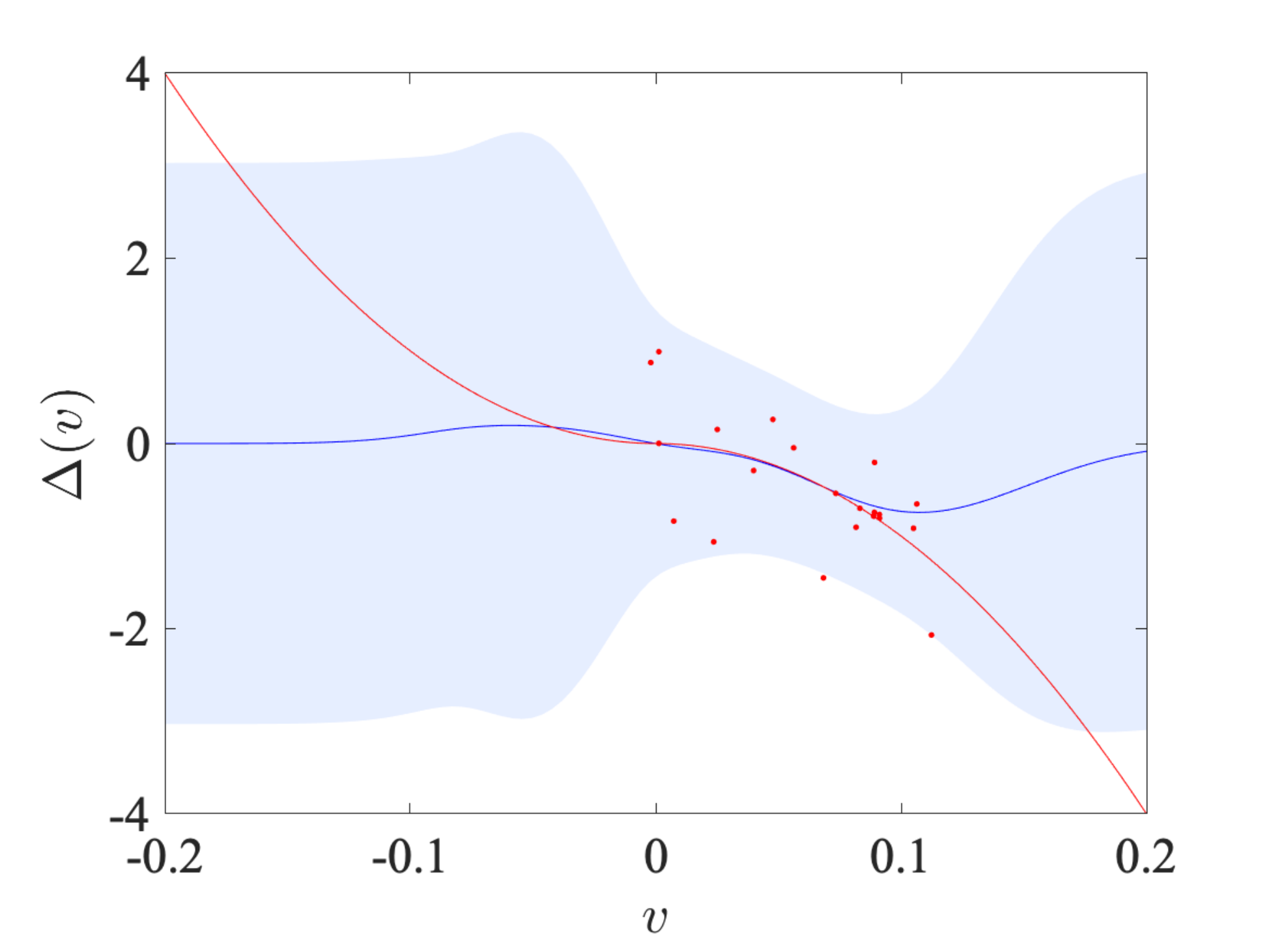}
        }
    }
    \centerline{
        \subfigure[$t=0.8$]{
            \includegraphics[width=0.50\columnwidth]{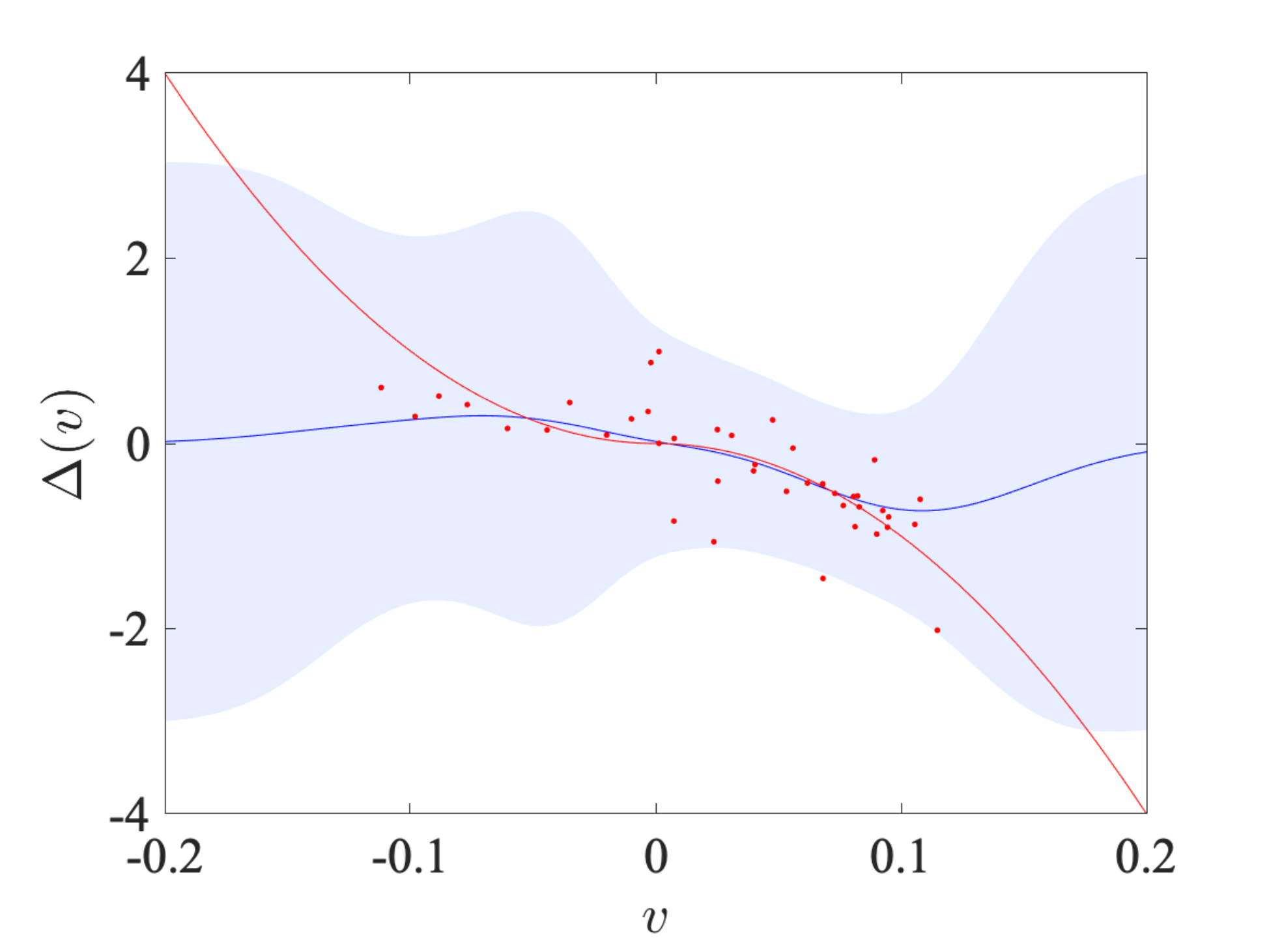}
        }
        \subfigure[$t=1.2$]{
            \includegraphics[width=0.50\columnwidth]{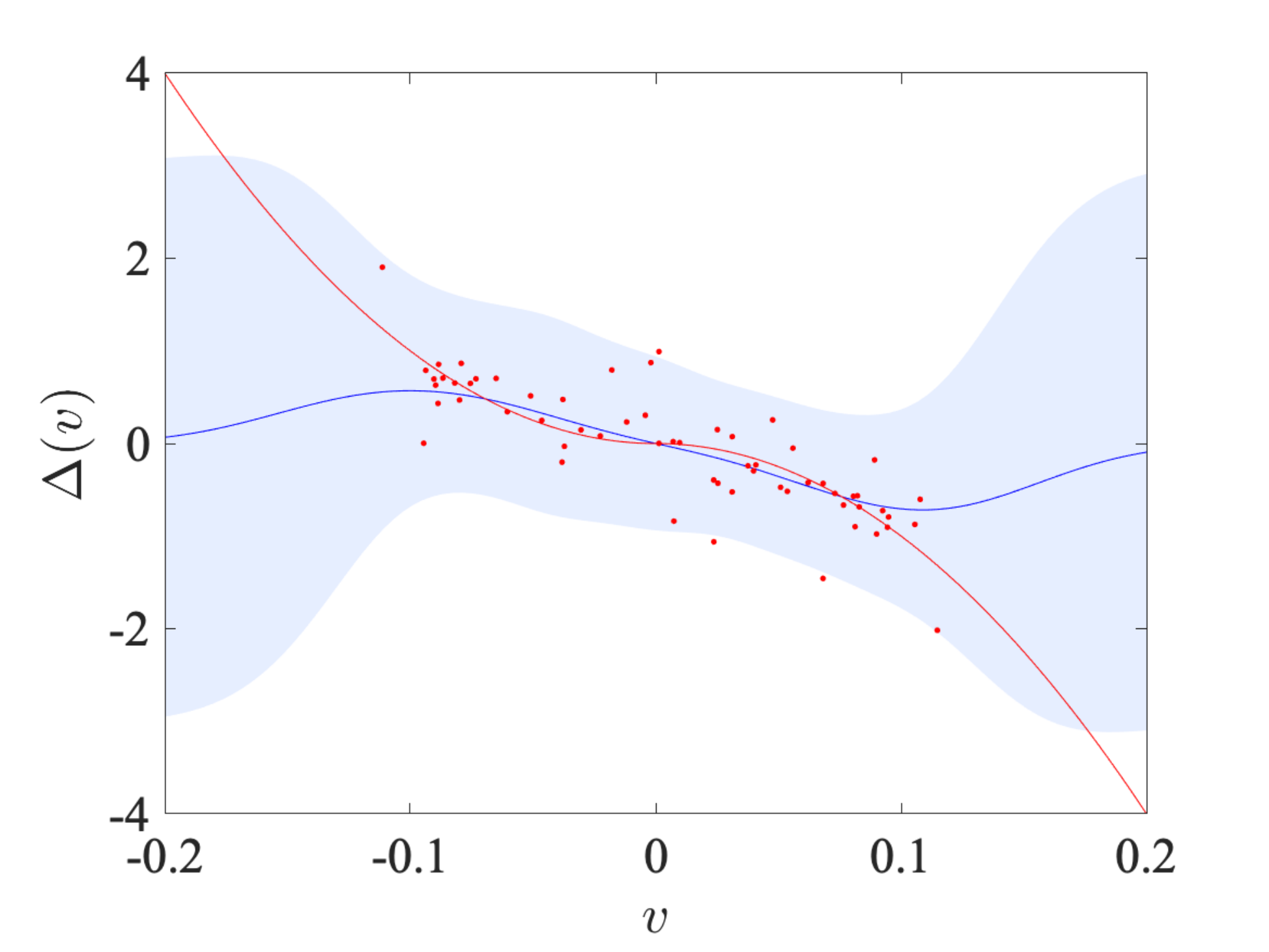}
        }
    }
    \centerline{
        \subfigure[$t=1.6$]{
            \includegraphics[width=0.50\columnwidth]{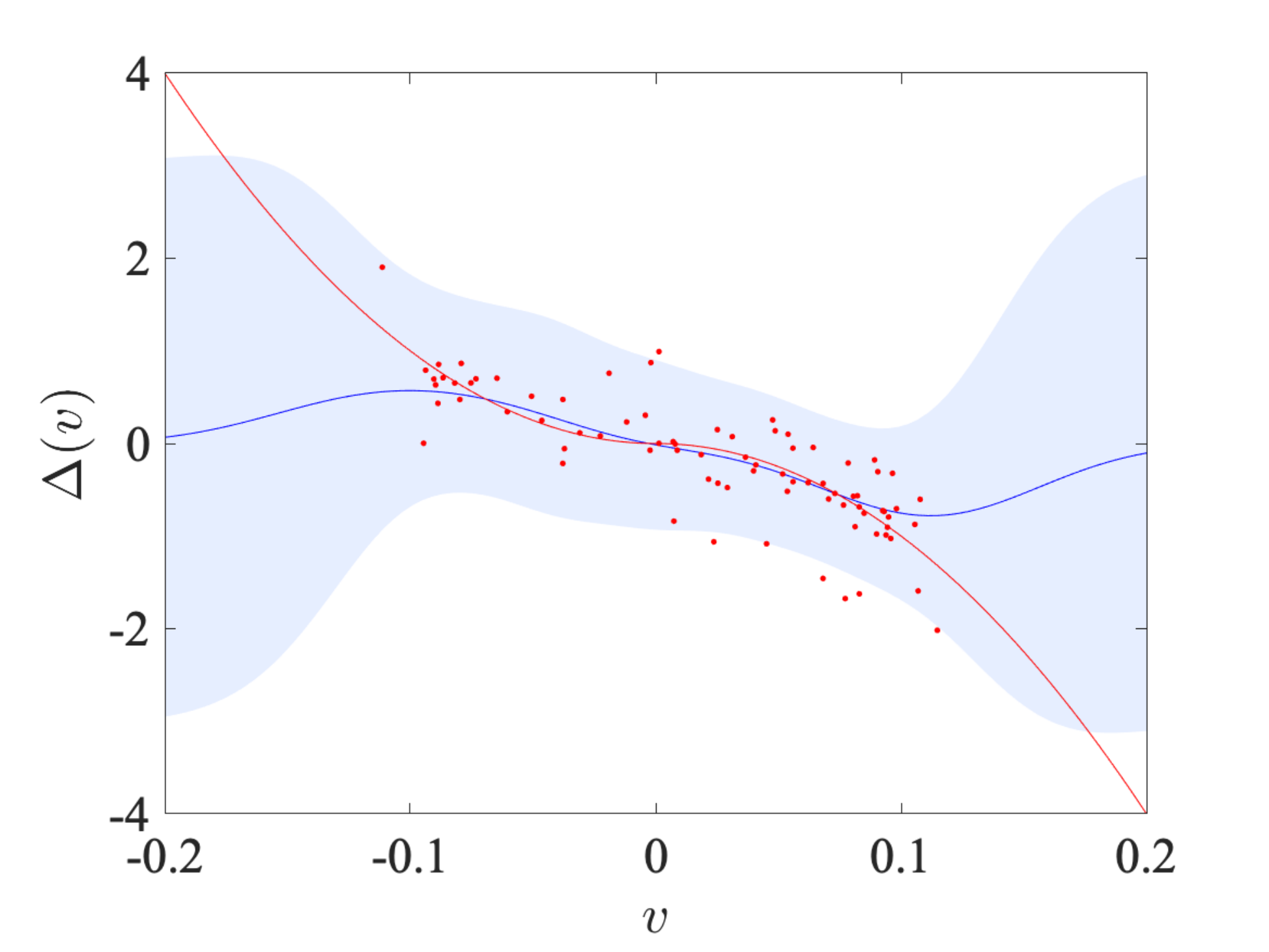}
        }
        \subfigure[$t=2.0$]{
            \includegraphics[width=0.50\columnwidth]{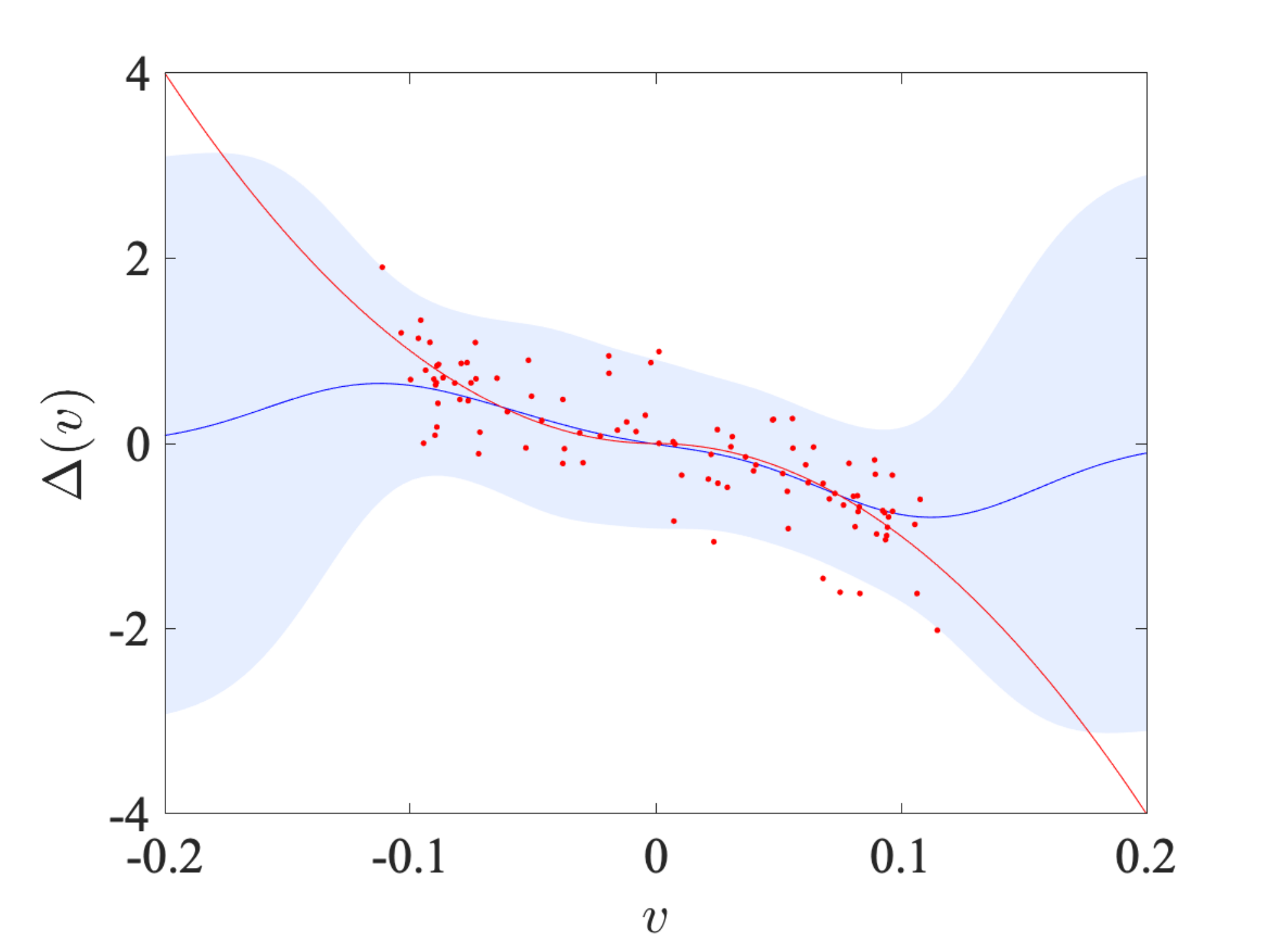}
        }
    }
    \caption{Progressive learning of extended Gaussian process (GP): the output of GP is illustrated by blue curves with $3\sigma$ bounds for varying $v$ with $x=0$, against the true $\Delta(v)$ illustrated by red curves. The red dots represent the training data. 
    As the time progresses, more data become available. Consequently, the learned model becomes gradually closer to the true value with an increased confidence level, which contributes to the improved accuracy of the adaptive learning Kalman filter.}\label{fig:EGP_Delta}
\end{figure}

\section{Conclusions}

We have presented an adaptive learning Kalman filter where the unknown disturbance is modeled as a Gaussian process.
This exhibits a unique feature of accounting uncertainties in the concurrent estimate of the state and the disturbance.
The future works include optimization of hyperparameters and sparsification for the  Gaussian process.

\appendix

\subsection{Properties of Gaussian Distribution}
Let $x\in\Re^n,y\in\Re^n$ be jointly Gaussian with
\begin{align*}
    \begin{bmatrix} 
        x \\ y
    \end{bmatrix}
    \sim
    \mathcal{N} \left(
        \begin{bmatrix}
            a \\ b
        \end{bmatrix}
        ,
        \begin{bmatrix}
            A & C \\
            C^T & B
        \end{bmatrix}
    \right),
\end{align*}
where $a\in\Re^n$, $b\in\Re^m$ are the mean values, and the matrices $A\in\Re^{n\times n}$, $B\in\Re^{m\times m}$, and $C\in\Re^{n\times m}$ are covariance matrices. 
The marginal distribution for $x$ is simply $x\sim\mathcal{N}(a, A)$, and the conditional distribution $x|y$ is 
\begin{align}
    x|y & \sim \mathcal{N} (a + CB^{-1}(y-b), A- C B^{-1} C^T).\label{eqn:Gau_cond}
\end{align}

Next, let $x\sim\mathcal{N}(a, A)$ and $y|x\sim\mathcal{N}(Hx+ c, B)$ for $H\in\Re^{m\times n}$ and $c\in\Re^m$.
The joint distribution is
\begin{align}
    \begin{bmatrix} 
        x \\ y
    \end{bmatrix}
    \sim
    \mathcal{N} \left(
        \begin{bmatrix}
            a \\ Ha + c
        \end{bmatrix}
        ,
        \begin{bmatrix}
            A & AH^T \\
            HA & HAH^T + B
        \end{bmatrix}
    \right).\label{eqn:Gau_joint}
\end{align}

\subsection{Squared exponential kernel}

The squared exponential kernel is defined as
\begin{align*}
    \kerG(x_i, x_j) = \sigma_f^2 \exp( - \frac{1}{2}(x_i-x_j)^T L^{-1} (x_i-x_j)) + \delta_{i,j} \sigma_n^2,
\end{align*}
for $\sigma_f, \sigma_n>0$ and a positive-definite symmetric matrix $L=L^T\in\Re^n$ that determines the characteristic length scale.



\end{document}